\renewcommand{\leq}{\leqslant}
\renewcommand{\geq}{\geqslant}
\newcommand{\argmin}{\operatornamewithlimits{argmin}}
\newcommand{\argmax}{\operatornamewithlimits{argmax}}
\newcommand{\mc}[1]{\mathbb{\item1}}
\newcommand{\circled}[1]{\raisebox{.5pt}{\textcircled{\raisebox{-.9pt} {#1}}}}
\def\la{\langle}
\def\ra{\rangle}
\def\R{\mathbb{R}}
\DeclareMathOperator{\spn}{span}
\DeclareMathOperator{\prox}{prox}
\DeclareMathOperator{\Step}{Step}
\title{Alternating Minimization Methods for Strongly Convex Optimization}
\abstract{We consider alternating minimization procedures for convex optimization problems with variable divided in many    block, each block being amenable for minimization with respect to its variable with freezed other variables blocks. In the case of two blocks, we prove a linear convergence rate for alternating minimization procedure under Polyak-Łojasiewicz condition, which can be seen as a relaxation of the strong convexity assumption. Under strong convexity assumption in many-blocks setting we provide an accelerated alternating minimization procedure with linear rate depending on the square root of the condition number as opposed to condition number for the non-accelerated method.
We also mention an approximating non-negative solution to a linear system of equations $Ax=y$ with alternating minimization of Kullback-Leibler (KL) divergence between $Ax$ and $y$.
}
\keywords{convex optimization, alternating minimization, block-coordinate method, complexity analysis}
\begin{document}

\section{Introduction}
In this paper we consider the minimization problem
\begin{equation}
    \label{eq:pr_st}
    \min_{x \in Q \subset \R^m} f(x), 
\end{equation}
where $f(x)$ is a smooth convex function with $L$-Lipschitz-continuous gradient.
Further, our main assumption is that the space $\R^m$ can be divided into $n$ disjoint subspaces $ \mathcal L_i \in \R^{n_i}$, $\sum n_i = m$, s.t. $\cup \mathcal L_i = \R^m$ and it is possible to minimize the objective $f$ in each block if the variables in all other blocks are fixed. Moreover, we are mostly interested in obtaining linear convergence rate and sufficient conditions for it.

To be exact, we suppose that $f$ has a block structure, i.e. $f(x) = f(x_1, \dots, x_n)$, and we know exact expression for the minimizer over $i$-th block
\[x^*_i = \argmin_{z\in Q_i \subset \mathcal L_i} f(x_1, \dots, x_{i-1}, z, x_{i+1}, \dots, x_n).\]
where $\cup Q_i = Q$. 

A very old and natural idea under this assumption is to use alternating minimization procedure \cite{ortega2000iterative,bertsekas1989parallel}, where the objective is minimized sequentially in each block. First of all, we are interested in the convergence rate analysis of this type of algorithms. For smooth strongly convex problems under some additional technical assumptions, the linear rate was obtained in \cite{luo1993error}. In \cite{beck2015convergence} the author analyze alternating minimization procedure for the case of two blocks in the general convex setting. The underlying assumption is presence of a smooth component in at least one block of variables. Also non-smoothness is possible via composite terms which still allow the block minimization. Since there is no strong convexity assumption, the obtained convergence rate is sublinear, namely $O(1/k)$, where $k$ is the iteration counter. Similar result, but for many-block setting was obtained in \cite{hong2017iteration,sun2015improved}. In the fully smooth setting under strong convexity assumption \cite{nutini2015coordinate} obtain linear rate of convergence also for the many-block setting. This linear rate is proportional to $\kappa$ -- efficient condition number of the problem. The autors of \cite{chambolle2017accelerated} provide an accelerated alternating minimization method for a very special problem with two blocks having the form of a sum of a quadratic function with two proximally friendly composite terms. The obtained convergence rate is $O(1/k^2)$ for convex setting and is linear with exponent $\sqrt{\kappa}$ in the strongly convex case. The authors of \cite{diakonikolas2018alternating} analyze a non-accelerated alternating minimization method and obtain $O(1/k)$ convergence rate in the convex setting and linear rate with exponent $\kappa$ for strongly convex case. They also propose an accelerated method for general convex setting with rate $O(1/k^2)$ and conjecture that their analysis can be extended for the strongly convex case. Interested readers can look also into the review \cite{hong2016unified}.

In this paper we, firstly, focus on obtaining linear rate of convergence for non-accelerated method with the exponent $\kappa$ in a more general setting of Polyak-Łojasiewicz condition \cite{polyak1987introduction}. This assumption is weaker than the strong convexity assumption since it follows from the strong convexity. Secondly, we propose an accelerated alternating minimization method for general smooth objective functions in the many-blocks setting. For this method we obtain accelerated convergence rate 
\[
O\left( \min\left\{ \frac{1}{k^2}, \left(1-\sqrt{\kappa}\right)^k\right\}\right).
\]

From the perspective of applications, many existing statistical algorithms 
can be derived as alternating minimization of Kullback–Leibler (KL) divergence~\cite{Csiszr1984InformationGA}. 
These include the expectation maximization (EM) algorithm for likelihood maximization~\cite{vardi1985statistical, andresen2016convergence}, the Bayesian maximum \textit{a posteriori} (MAP) method with gamma-distributed priors~\cite{lange1987theoretical}, the multiplicative algebraic reconstruction technique \sloppy (MART)  \cite{GORDON1970algebraic} and the "simultaneous" MART (SMART) algorithm~\cite{byrne1992iterative}. Each of these algorithms can be viewed as an algorithm to find an approximate non-negative solution to a linear system $Ax=y$. For example, the SMART can be shown to minimize KL(Ax, y)~\cite{byrne1992iterative}.
Some other application of optimization to inverse problems can be found in~\cite{ye2019optimization, vogel2002computational, byrne2014iterative}.

Another example is a system of nonlinear equations $g(x)=0$, where $g:\R^n \rightarrow \R^m$, $m<n$ and there exists some $\mu$ s.t. for any $x\in\R^n$
\[
\lambda_{min}\left( \frac{\partial g(x)}{\partial x} \left[\frac{\partial g(x)}{\partial x}\right]^T\right) \geq \mu.
\] 
Then the function $f(x) = \|g(x)\|_2^2$ satisfies Polyak-Łojasiewicz condition~\cite{nesterov2006cubic}, and the algorithms analyzed here can be applied and have linear convergence rate.

\section{Simple alternating minimization algorithm and notation}
Consider for simplicity alternating minimization algorithm for the problem with only two block structure. All the following results and proofs can be easily extended for any number of blocks.

Consider alternating minimization Algorithm~\ref{AM} for the problem 
\begin{equation}
    \label{F}
     \min_{x_1 \in Q_1, x_2 \in Q_2} F(x_1,x_2) \equiv f(x_1,x_2)+g_1(x_1)+g_2(x_2),
\end{equation}
where $f(x)$ is a smooth convex function with $L$-Lipschitz-continuous gradient and each $g_i(x)$ is a convex possibly non-smooth function.

\begin{algorithm}[H]
\caption{Alternating Minimization (AM)}
\label{AM}
\begin{algorithmic}[1]
   \REQUIRE Starting point $x_0$.
    \ENSURE $x^k$
    \STATE Set $x^0$.
    \FOR{$k \geqslant 0$}
    \IF  {$k$ mod $2=0$}
        \STATE $x_1^{k+1} = \argmin_{z \in Q_1} f(z, x_2^k) + g_1(z)$
	\ELSE
	    \STATE $x_2^{k+1} = \argmin_{z \in Q_2} f(x_1^{k+1}, z) + g_2(z)$
	\ENDIF
\ENDFOR
\end{algorithmic}
\label{alg-1}
\end{algorithm}
We introduce the following notation:
\[ x^k = (x^k_1, x^k_2), \quad  x^{k+\frac{1}{2}} = (x^{k+1}_1, x^k_2)\]
\[ T_{M}(x) = (T^1_{M}(x), T^2_{M}(x)) \quad G_{M}(x) = (G_{M}^1(x), G_{M}^2(x))\]
\begin{equation}
    T^i_{M}(x) = \prox_{\frac{1}{M}g_i}\left(x_i - \frac{1}{M}\nabla_i f(x)\right), \quad G_{M}^i(x) = M(x_i - T_{M}^i(x)) 
    \label{T}
\end{equation}

For the case $i=1$
\begin{multline*}
    T_{M}^1(x^k) = \argmin_{u \in Q_1}\left(g_1(u) + \frac{{M}}{2} \|u-(x^k_1 - \frac{1}{{M}}\nabla_1 f(x^k))\|^2\right) =
    \\
    = \argmin_{u \in Q_1}\left(g_1(u) + \frac{{M}}{2} \|u-x^k_1\|^2 + \la \nabla_1 f(x^k), u - x^k_1\ra\right).
\end{multline*}

Next we write
\begin{align*}
    \partial_1 F(x_1^{k+1}, x_2^k) &= \nabla_1 f(x_1^{k+1}, x_2^k) + \partial g_1(x_1^{k+1})
    \\
    \partial_2 F(x_1^k, x_2^k) &= \nabla_2 f(x_1^k, x_2^k) + \partial g_2(x_2^k),
\end{align*}
where $\partial_1 F(x_1^{k+1}, x_2^k)$ denotes a subgradient of $F$ w.r.t first block, e.g. such a set $S$, that for all $s \in S$ the following holds
\[
    F(y, x_2^k) \geq F(x_1^{k+1}, x_2^k) + \la s, y - x_1^{k+1} \ra.
\]
$\partial_2 F(x_1^k, x_2^k)$ is defined similarly.

Then, optimality conditions can be written as follows:
\begin{align}
    \la \nabla_1 f(x_1^{k+1}, x_2^k), u - x_1^{k+1} \ra &\geq \la -\partial g_1(x_1^{k+1}), u - x_1^{k+1} \ra \notag
    \\
    \la \nabla_2 f(x_1^k, x_2^k), v - x_2^{k} \ra &\geq \la -\partial g_2(x_2^k), v - x_2^{k} \ra
    \label{opt-0.1}
\end{align}
for all $u \in Q_1$, $v \in Q_2$.

The following should clarify the notation.
\begin{lemma}
    For points, generated by Algorithm~\ref{AM} the following holds
    \[G^1_{M}(x^{k+\frac{1}{2}}) = 0, \quad G^2_{M}(x^k) = 0\]
    \[T^2_{M}(x^k) = x_2^k, \quad T^1_{M}(x^{k+\frac{1}{2}}) = x_1^{k+1}\] for all $k$.
\end{lemma}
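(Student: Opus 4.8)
The plan is to read the four identities straight off the update rules of Algorithm~\ref{AM}, once I have established a single elementary fact: the prox-gradient operator $T^i_M$ leaves unchanged any point whose $i$-th block already exactly minimizes the objective in that block. Everything else is bookkeeping.

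First I would settle the indexing. A full iteration $x^k \to x^{k+1}$ is composed of the two successive block steps, with $x^{k+\frac{1}{2}} = (x_1^{k+1}, x_2^k)$ sitting between them. By construction $x_1^{k+1} = \argmin_{z\in Q_1}\{f(z,x_2^k)+g_1(z)\}$, so the first block of $x^{k+\frac{1}{2}}$ exactly minimizes $F$ over $Q_1$ with the second block frozen at $x_2^k$. Symmetrically, $x_2^k$ was produced at the previous iteration as $\argmin_{z\in Q_2}\{f(x_1^k,z)+g_2(z)\}$, so the second block of $x^k$ exactly minimizes $F$ over $Q_2$ with the first block frozen at $x_1^k$. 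In each case the associated stationarity condition is exactly the corresponding line of the optimality conditions~\eqref{opt-0.1}.

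The key step is the fixed-point claim: if $x_i$ satisfies $\la \nabla_i f(x), u-x_i\ra \geq \la -\partial g_i(x_i), u-x_i\ra$ for all $u\in Q_i$, then $T^i_M(x)=x_i$. To prove it I would recall from~\eqref{T} that $T^i_M(x)$ is the unique minimizer of the $M$-strongly convex subproblem $\min_{u\in Q_i}\{g_i(u)+\tfrac{M}{2}\|u-x_i\|^2+\la\nabla_i f(x),u-x_i\ra\}$, uniqueness coming from strong convexity of the added quadratic. I would then test the candidate $u=x_i$ in the first-order optimality condition of this subproblem: the quadratic term contributes $M(x_i-x_i)=0$, so the condition collapses to precisely the stationarity inequality assumed for $x_i$. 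Hence $x_i$ is the unique minimizer of the subproblem, i.e.\ $T^i_M(x)=x_i$.

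Finally I would apply this with $i=2$ at $x=x^k$ to obtain $T^2_M(x^k)=x_2^k$, and with $i=1$ at $x=x^{k+\frac{1}{2}}$ to obtain $T^1_M(x^{k+\frac{1}{2}})=x_1^{k+1}$ (the first block of $x^{k+\frac{1}{2}}$ being $x_1^{k+1}$); the two gradient-mapping identities then drop out of $G^i_M(x)=M(x_i-T^i_M(x))$, giving $G^2_M(x^k)=0$ and $G^1_M(x^{k+\frac{1}{2}})=0$. I do not expect a genuine obstacle here, since the statement is essentially a consistency check between the prox notation and the algorithm. The only points demanding care are the indexing—matching each block step to the point at which its minimizer is attained—and the appeal to strong convexity of the prox subproblem, which is what makes the fixed point unique and hence the identities exact.
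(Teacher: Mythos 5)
Your proof is correct and follows essentially the same route as the paper: both verify that the block-stationarity condition~\eqref{opt-0.1} coming from the algorithm's exact block minimization is precisely the first-order optimality condition of the prox subproblem defining $T^i_M$ (the quadratic term having zero gradient at the tested point), whence $T^2_M(x^k)=x_2^k$, $T^1_M(x^{k+\frac{1}{2}})=x_1^{k+1}$, and the $G^i_M$ identities follow from the definition. Your explicit appeal to strong convexity of the prox subproblem to get uniqueness of the minimizer is a small refinement the paper leaves implicit, as is your symmetric treatment of both blocks.
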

\begin{proof}
    \[T^2_{M}(x^k) = \argmin_{v \in Q_2}\left(g_2(v) + \frac{{M}}{2} \|v-x^k_2\|^2 + \la \nabla_2 f(x^k), v - x^k_2\ra\right) = x_2^k,\]
    The first is that $\la \partial g_2(x_2^k) + \nabla_2 f(x_1^k, x_2^k), v - x_2^{k} \ra \geq 0$ for all $v \in Q_2$ by (\ref{opt-0.1}), second $\la \partial \|v-x^k_2\|^2; v-x^k_2\ra \geq 0$ since $x^k_2$ is a minimizer of $\|v-x^k_2\|^2$. Summing this two inequalities implies optimality condition  for $T_M^2(x^k)$ at the point $x_2^k$.
    \[G_{M}^2(x^k) = M(x^k_2 - T_{M}^2(x^k)) = M(x^k_2 - x^k_2) = 0,\] where the last equality follows from the definition of $G^2_{M}(x^k)$.
\end{proof}

Introduce also the following notation:
\begin{multline}
    \label{block-prox-pl}
    \mathcal{D}_1(x^k,M) 
    \\
    \equiv -2M \min_{u \in Q_1} \big[ \langle \nabla_1 f(x^k) , u-x_1^k \rangle + \frac{M}{2}||u-x_1^k||^2+ g_1(u) - g_1(x_1^k)  \big]
\end{multline}
\begin{multline}
    \mathcal{D}_2(x^{k+\frac{1}{2}},M) 
    \\
    \equiv -2M \min_{v \in Q_2} \big[ \langle \nabla_2 f(x^{k+\frac{1}{2}}), v-x_2^k \rangle + \frac{M}{2}||v-x_2^k||^2 + g_2(v) - g_2(x_2^k) \big].
\end{multline}
Notice that $T_{M}(x^k)$ and $T_{M}(x^{k+\frac{1}{2}})$ are corresponding minimizers of these two above problems.

\section{Proximal Polyak-Łojasiewicz condition}\label{app:probs}
In this section we prove that strongly convex function satisfies the proximal-PL inequality condition \cite{karimi2016linear}. 

We suppose, that strong convexity parameter can different for different variable blocks:
\begin{multline*}
    f(u, v) \geqslant f(\xi, \eta) + \langle \nabla_1 f(\xi,\eta), u-\xi \rangle + \langle \nabla_2 f(\xi, \eta), v-\eta \rangle
    \\
    +  \frac{\mu_1}{2} \|u-\xi\|^2 + \frac{\mu_2}{2} \|v-\eta\|^2,
\end{multline*}
for any $u,\xi \in Q_1$ and $v,\eta \in Q_2$.
Notice, that the single variable definition can be written with $\mu = \min(\mu_1, \mu_2)$.


The main result of this section reads as follows.
\begin{theorem}\label{th:prox-pl}
 If $f$ is strongly convex and $g$ is convex, then $F(x) = f(x)  + g(x)$ satisfies proximal PL-conditions
\begin{equation}
    \label{prox-pl}
    F^*  \geq  F(x^k) - \frac{1 }{ 2\mu_1}\mathcal{D}_1(x^k,\mu_1), \quad
    F^*  \geq  F(x^{k+\frac{1}{2}}) - \frac{1}{2\mu_2}\mathcal{D}_2(x^{k+\frac{1}{2}},\mu_2),
\end{equation}

for the points $x^k$ and $x^{k+\frac{1}{2}}$, generated by Algorithm~\ref{AM}.
\end{theorem}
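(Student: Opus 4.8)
The plan is to establish the first inequality in~(\ref{prox-pl}); the second follows by the symmetric argument, working at the point $x^{k+\frac{1}{2}}=(x_1^{k+1},x_2^k)$ and using the first-block optimality of $x_1^{k+1}$ in place of the second-block optimality of $x_2^k$.

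First I would unfold the definition~(\ref{block-prox-pl}) of $\mathcal{D}_1$. The factor $-\tfrac{1}{2\mu_1}$ cancels the outer $-2\mu_1$ and restores the inner minimization, and the copy of $g_1(x_1^k)$ inside the bracket cancels against the $g_1(x_1^k)$ contained in $F(x^k)$, so that
\[
F(x^k) - \frac{1}{2\mu_1}\mathcal{D}_1(x^k,\mu_1) = f(x^k) + g_2(x_2^k) + \min_{u\in Q_1}\Big[\langle\nabla_1 f(x^k),u-x_1^k\rangle + \tfrac{\mu_1}{2}\|u-x_1^k\|^2 + g_1(u)\Big].
\]
It therefore suffices to bound $F^*$ from below by this right-hand side.

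Next I would apply the block-wise strong convexity inequality with base point $(\xi,\eta)=x^k$ and an arbitrary feasible $(u,v)$, and then add $g_1(u)+g_2(v)$ to both sides. This gives a pointwise lower bound on $F(u,v)$ that splits into a first-block group in $u$ and a second-block group in $v$. The crucial step, and the main obstacle, is to discard the second-block group. Here I invoke the Lemma: since $x_2^k$ minimizes over the second block at $x^k$, optimality condition~(\ref{opt-0.1}) together with convexity of $g_2$ yields $\langle\nabla_2 f(x^k),v-x_2^k\rangle + g_2(v) - g_2(x_2^k)\geq 0$ for every $v\in Q_2$; dropping the nonnegative quadratic $\tfrac{\mu_2}{2}\|v-x_2^k\|^2$ then shows that the entire second-block group is bounded below by the constant $g_2(x_2^k)$. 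This decoupling is exactly what lets the single-block quantity $\mathcal{D}_1$ control the full gap, since without the optimality of $x_2^k$ one would be forced to keep a joint two-block minimization on the right.

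Finally I would minimize the resulting pointwise bound. The left-hand side, minimized over all feasible $(u,v)$, gives $F^*$, while the right-hand side no longer depends on $v$ and so reduces to $f(x^k)+g_2(x_2^k)$ plus the first-block minimization over $u$ identified in the first step. Comparing with the displayed identity yields $F^*\geq F(x^k)-\frac{1}{2\mu_1}\mathcal{D}_1(x^k,\mu_1)$, as required.
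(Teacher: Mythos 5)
Your proof is correct and follows essentially the same route as the paper's: block-wise strong convexity at $x^k$, then the second-block optimality condition~(\ref{opt-0.1}) combined with convexity of $g_2$ to show $\langle\nabla_2 f(x^k),v-x_2^k\rangle + g_2(v)-g_2(x_2^k)\geq 0$ (discarding the nonnegative quadratic), and finally minimization over $(u,v)$ with the $v$-dependence eliminated so that only the first-block problem defining $\mathcal{D}_1$ remains. The only difference is cosmetic ordering—you add $g_1(u)+g_2(v)$ before decoupling, the paper after—so there is nothing to change.
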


\begin{proof}
By the strong convexity of $f$ we have
\begin{multline*}
    f(u, v) \geqslant f(x^k_1, x^k_2) + \langle \nabla_1 f(x^k_1, x^k_2), u-x^k_1 \rangle + \langle \nabla_2 f(x^k_1, x^k_2), v-x^k_2 \rangle
    \\
    \shoveright{+  \frac{\mu_1}{2} \|u-x^k_1\|^2 + \frac{\mu_2}{2} \|v-x^k_2\|^2 \stackrel{\scriptsize{\circled{1}}}{\geq}}
    \\
    \stackrel{\scriptsize{\circled{1}}}{\geq} f(x^k_1, x^k_2) 
    + \langle \nabla_1 f(x^k_1, x^k_2), u-x^k_1 \rangle - \langle \partial g_2( x^k_2), v-x^k_2 \rangle +  \frac{\mu_1}{2} \|u-x^k_1\|^2 \stackrel{\scriptsize{\circled{2}}}{\geq}
    \\
    \stackrel{\scriptsize{\circled{2}}}{\geq} f(x^k_1, x^k_2) 
    + \langle \nabla_1 f(x^k_1, x^k_2), u-x^k_1 \rangle + g_2(x^k_2) + g_2(v) +  \frac{\mu_1}{2} \|u-x^k_1\|^2
\end{multline*}
where
\begin{itemize}
    \item \circled{1} by ~\eqref{opt-0.1}
    \item \circled{2} by convexity of $g_2$
\end{itemize}

which leads to
\begin{multline*}
    F(u, v) 
    \\
    \shoveright{\quad\geqslant F(x^k_1, x^k_2) + \langle \nabla_1 f(x^k_1, x^k_2), u-x^k_1 \rangle - g_1(x^k_1) + g_1(u) +  \frac{\mu_1}{2} \|u-x^k_1\|^2.}
\end{multline*}
Minimizing both sides respect to $u \in Q_1, v\in Q_2$, 
\begin{multline}
    F^*  \geq F(x^k) + \min_{u} \big[\langle \nabla_1 f(x^k) , u-x_1^k \rangle + \frac{\mu_1}{2}||u-x_1^k||^2 + g_1(u) - g_1(x_1^k)\big]  
    \\
    =  F(x^k) - \frac{1 }{ 2\mu_1}\mathcal{D}_1(x^k,\mu_1).
\end{multline}
Rearranging, we have our result.

The similar result holds for the point $x^{k+\frac{1}{2}}$:
\[
F^*  \geq  F(x^{k+\frac{1}{2}}) - \frac{1 }{ 2\mu_2}\mathcal{D}_2(x^{k+\frac{1}{2}},\mu_2).
\]

\end{proof}

We also need the Corollary 1 from~\cite{karimi2016linear}, which proof is almost the same as the proof of the following lemma:
\begin{lemma}\label{lem:1}
For any differentiable function $f$ and any convex function $g$, given $\lambda_2 > \lambda_1 > 0$ we have
\begin{align*}
    \mathcal{D}_1(x^k,\lambda_2) &\geq  \mathcal{D}_1(x^k,\lambda_1),
    \\
    \mathcal{D}_2(x^{k+\frac{1}{2}},\lambda_2) &\geq  \mathcal{D}_2(x^{k+\frac{1}{2}},\lambda_2).
\end{align*}
\end{lemma}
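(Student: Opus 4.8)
The plan is to show that the map $M \mapsto \mathcal{D}_1(x^k, M)$ is non-decreasing by a direct scaling argument on the inner minimization problem, following the idea behind Corollary~1 of \cite{karimi2016linear}. To this end I abbreviate $a(u) = \langle \nabla_1 f(x^k), u - x_1^k\rangle + g_1(u) - g_1(x_1^k)$ and $b(u) = \frac{1}{2}\|u - x_1^k\|^2$, so that, writing $\phi(M) = \min_{u \in Q_1}[a(u) + M b(u)]$, we have $\mathcal{D}_1(x^k, M) = -2M\,\phi(M)$. Since the feasible choice $u = x_1^k$ gives $a(x_1^k) + M b(x_1^k) = 0$, I record at the outset that $\phi(M) \leq 0$ for every $M > 0$, which also explains why $\mathcal{D}_1$ is non-negative.

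Fix $\lambda_2 > \lambda_1 > 0$ and set $t = \lambda_1/\lambda_2 \in (0,1]$. The key step is a change of variables that interpolates between a candidate point and the center: for arbitrary $z \in Q_1$ I put $u = (1-t)x_1^k + t z$, which again lies in $Q_1$ by convexity of $Q_1$. Two observations make this substitution work. First, the quadratic part scales exactly, $b(u) = t^2 b(z)$. Second, convexity of $g_1$ gives $g_1(u) \leq (1-t)g_1(x_1^k) + t\,g_1(z)$, and since $u - x_1^k = t(z - x_1^k)$ the linear part is exactly $t$ times that at $z$; together these yield $a(u) \leq t\,a(z)$. Combining, and using $\lambda_2 t = \lambda_1$ and hence $\lambda_2 t^2 = \lambda_1 t$, I obtain
\[
a(u) + \lambda_2 b(u) \leq t\,a(z) + \lambda_1 t\, b(z) = t\big(a(z) + \lambda_1 b(z)\big).
\]

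Because $u \in Q_1$, the left-hand side bounds $\phi(\lambda_2)$ from above, so $\phi(\lambda_2) \leq t\big(a(z) + \lambda_1 b(z)\big)$ for every $z \in Q_1$; taking the infimum over $z$ and using $t > 0$ gives $\phi(\lambda_2) \leq t\,\phi(\lambda_1)$. Multiplying by the positive factor $\lambda_2$ and invoking $\lambda_2 t = \lambda_1$ yields $\lambda_2\,\phi(\lambda_2) \leq \lambda_1\,\phi(\lambda_1)$, which after multiplication by $-2$ is precisely $\mathcal{D}_1(x^k, \lambda_2) \geq \mathcal{D}_1(x^k, \lambda_1)$. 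The statement for $\mathcal{D}_2$ follows by the identical argument, replacing $(\nabla_1 f(x^k), x_1^k, g_1, Q_1)$ by $(\nabla_2 f(x^{k+\frac{1}{2}}), x_2^k, g_2, Q_2)$.

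I expect the only genuine subtlety to be the feasibility of the interpolated point $u = (1-t)x_1^k + t z$, which is what legitimizes passing to the infimum over $z$; this is exactly where convexity of $Q_1$ is used, and it is what lets the quadratic scaling $b(u) = t^2 b(z)$ combine cleanly with the convexity bound $a(u) \leq t\,a(z)$. Everything else is bookkeeping with the factors $t$, $\lambda_2 t = \lambda_1$, and $\lambda_2 t^2 = \lambda_1 t$.
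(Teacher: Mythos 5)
Your proof is correct, and while its algebraic core is the same scaling idea as the paper's, the treatment of the constraint set is genuinely different and yours is the stronger argument. The paper proves the lemma by applying convexity of $g_1$ in the form $g_1(\alpha z) \leq \alpha g_1(z) + (1-\alpha)g_1(0)$ and then performing the change of variables $\zeta = \lambda_i(u - x_1^k)$ separately in the two minimization problems; since that substitution rescales the feasible region by $1/\lambda_i$, the two problems end up posed over different sets $\frac{Q}{\lambda_2} + x_1^k$ and $\frac{Q}{\lambda_1} + x_1^k$, and the paper must assume the inclusion \eqref{sets-structure} (satisfied, e.g., when $Q = \R^n$) to compare them. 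You use the same two scaling facts --- $a(u) \leq t\,a(z)$ by convexity of $g_1$, and exact quadratic scaling $b(u) = t^2 b(z)$ --- but you parametrize the comparison point directly as the convex combination $u = (1-t)x_1^k + t z$ with $t = \lambda_1/\lambda_2$, which lies in $Q_1$ by convexity of $Q_1$ together with feasibility of $x_1^k$ (guaranteed by the algorithm). Restricting the $\lambda_2$-problem to the shrunken copy $(1-t)x_1^k + t Q_1 \subset Q_1$ thus replaces the paper's set-inclusion hypothesis, and the monotonicity $\mathcal{D}_1(x^k,\lambda_2) \geq \mathcal{D}_1(x^k,\lambda_1)$ comes out valid for an \emph{arbitrary} convex constraint set. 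This is a real gain, not just cosmetics: the paper itself states, in its section on general convex constraint sets, that Lemma~\ref{lem:1} ``requires special structure of constraint sets~\eqref{sets-structure}'' and for general convex constraints only establishes the weaker contraction factor $\left(1 - \frac{\mu_i}{L_i + \mu_i}\right)$, whereas with your version of the lemma the rate $\left(1 - \frac{\mu_i}{L_i}\right)$ of Theorem~\ref{th:linear} extends to general convex $Q_1, Q_2$ unchanged. (One minor point: the second inequality in the lemma as printed has a typo --- its right-hand side should read $\mathcal{D}_2(x^{k+\frac{1}{2}},\lambda_1)$ --- and your argument proves the intended statement.)
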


\begin{proof}
By convexity of $g$ for $0<\alpha<1$ 
\begin{align*}
    g(\alpha z) = g(\alpha z + (1-\alpha) \cdot 0) \leq \alpha g(z) + (1-\alpha) g(0).
\end{align*}
Then with $z=\frac{\zeta}{\lambda_1}$ and $\alpha=\frac{\lambda_1}{\lambda_2}$

\begin{align*}
    g\left(\frac{\zeta}{\lambda_2}\right) - g(0)  &\leq \frac{\lambda_1}{\lambda_2} \big( g\left(\frac{\zeta}{\lambda_1}\right) - g(0)\big)
    \\
    \lambda_2 \cdot \Bigg(g\left(\frac{\zeta}{\lambda_2}\right) - g(0)\Bigg) &\leq \lambda_1 \cdot\Bigg(g\left(\frac{\zeta}{\lambda_1}\right) - g(0)\Bigg)
\end{align*}
Then move values of our function:

\begin{align*}
    \lambda_2 \cdot \Bigg(g\left(\frac{\zeta}{\lambda_2}+x_1^k\right) - g(0+x_1^k)\Bigg) &\leq \lambda_1 \cdot\Bigg(g\left(\frac{\zeta}{\lambda_1}+x_1^k\right) - g(0+x_1^k)\Bigg)
\end{align*}

and add to both sides
\[
    h(\zeta) = \langle \nabla_1 f(x^k) , \zeta \rangle + \frac{1}{2}||\zeta||^2 
\]
we have

\begin{multline*}
    \min_{\zeta \in  Q} \langle \nabla_1 f(x^k) , \zeta \rangle + \frac{1}{2}||\zeta||^2 + \lambda_2 \cdot \Bigg(g\left(\frac{\zeta}{\lambda_2}+x_1^k\right) - g(x_1^k)\Bigg)
    \\
    \leq \min_{\zeta \in Q} \langle \nabla_1 f(x^k) , \zeta \rangle + \frac{1}{2}||\zeta||^2 + \lambda_1 \cdot\Bigg(g\left(\frac{\zeta}{\lambda_1}+x_1^k\right) - g(x_1^k)\Bigg)
\end{multline*}

Or with the change of variables $\zeta = \lambda_i(u-x_1^k)$

\begin{multline*}
    \lambda_2 \min_{u \in \frac{Q}{\lambda_2} + x_1^k} \langle \nabla_1 f(x^k) , u-x_1^k \rangle + \frac{\lambda_2}{2}||u-x_1^k||^2 + g\left(u\right) - g(x_1^k)
    \\
    \leq \lambda_1 \min_{u \in \frac{Q}{\lambda_1} + x_1^k} \langle \nabla_1 f(x^k) , u-x_1^k \rangle + \frac{\lambda_1}{2}||u-x_1^k||^2 + g\left(u\right) - g(x_1^k)
\end{multline*}

which holds if 
\begin{equation}
    \label{sets-structure}
    \frac{Q}{\lambda_2} + x_1^k \subset \frac{Q}{\lambda_1} + x_1^k.
\end{equation}
For example it holds if $Q = \R^n$.

\end{proof}

\section{Convergence}
In this section we prove convergence rate of Algorithm~\ref{AM}.
If proximal PL-condition hold for $F$, then one can guarantee linear convergence rate, if not, the convergence is polynomial. The two following subsections contain proofs of that.
\subsection{Linear convergence}
Lipschitz continuity of the gradient of function $f$ w.r.t. a $\|\cdot\|$ implies
\begin{multline}
    f(u, v) \leqslant f(\xi, \eta)
    + \langle \nabla_1 f(\xi,\eta), u-\xi \rangle + \langle \nabla_2 f(\xi, \eta), v-\eta \rangle 
    \\
    + \frac{L_1}{2} \|u-\xi\|^2 + \frac{L_2}{2} \|v-\eta\|^2,
\end{multline}
where again we suppose that constant $L_1$ and $L_2$ can be different for different blocks,
and the the constant in the regular definition of Lipshitz continuity of the gradient of $f$ is described by $L = \max(L_1, L_2)$.

\begin{theorem}\label{th:linear}
If $F$ from \eqref{F} satisfies the proximal-PL inequality ~\eqref{block-prox-pl}. Then the algorithm \ref{alg-1} has a linear convergence. 
\begin{equation}
    F(x^{k+1}) - F^*  \leq \left(1 - \frac{ \mu_2 }{ L_2}\right) \left(1 - \frac{ \mu_1 }{ L_1}\right) [ F(x^{k}) - F^*].
\end{equation}
\end{theorem}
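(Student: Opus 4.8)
The plan is to handle the two half-steps of one outer iteration separately: I will show that the first block update contracts the optimality gap by the factor $1 - \mu_1/L_1$ and the second by $1 - \mu_2/L_2$, then chain the two estimates. Each half-step combines a sufficient-decrease bound (from block-wise smoothness plus exact minimization) with the proximal-PL hypothesis, the two being reconciled through the step-size monotonicity of $\mathcal{D}_i$.

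First I would establish sufficient decrease for the first block. Fixing $u \in Q_1$ and invoking $L_1$-Lipschitz continuity of $\nabla_1 f$ in the first block, then adding $g_1(u) + g_2(x_2^k)$ to both sides, gives
\[
F(u, x_2^k) \leq F(x^k) + \langle \nabla_1 f(x^k), u - x_1^k \rangle + \frac{L_1}{2}\|u - x_1^k\|^2 + g_1(u) - g_1(x_1^k).
\]
Because $x_1^{k+1}$ is the \emph{exact} minimizer of $F(\cdot, x_2^k)$ over $Q_1$, the value $F(x^{k+\frac{1}{2}})$ is at most the minimum over $u$ of the right-hand side, which by the definition~\eqref{block-prox-pl} of $\mathcal{D}_1$ equals $F(x^k) - \frac{1}{2L_1}\mathcal{D}_1(x^k, L_1)$. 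Hence
\[
F(x^{k+\frac{1}{2}}) \leq F(x^k) - \frac{1}{2L_1}\mathcal{D}_1(x^k, L_1).
\]

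Next I would lower-bound the \emph{same} quantity $\mathcal{D}_1(x^k, L_1)$ using the proximal-PL hypothesis. The first inequality in \eqref{prox-pl} rearranges to $\mathcal{D}_1(x^k, \mu_1) \geq 2\mu_1\,[F(x^k) - F^*]$, but this involves $\mathcal{D}_1(x^k, \mu_1)$, whereas the decrease bound carries $\mathcal{D}_1(x^k, L_1)$. This mismatch is precisely what Lemma~\ref{lem:1} repairs: since a strong-convexity parameter never exceeds the corresponding Lipschitz constant, $L_1 \geq \mu_1$, and therefore $\mathcal{D}_1(x^k, L_1) \geq \mathcal{D}_1(x^k, \mu_1) \geq 2\mu_1\,[F(x^k) - F^*]$. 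Substituting and subtracting $F^*$ yields the one-block contraction
\[
F(x^{k+\frac{1}{2}}) - F^* \leq \left(1 - \frac{\mu_1}{L_1}\right)[F(x^k) - F^*].
\]

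Finally I would repeat the argument verbatim for the second block, starting from $x^{k+\frac{1}{2}}$ and using $L_2$-smoothness, the exact minimality of $x_2^{k+1}$, the second inequality in \eqref{prox-pl}, and Lemma~\ref{lem:1} with $L_2 \geq \mu_2$, to get $F(x^{k+1}) - F^* \leq (1 - \mu_2/L_2)[F(x^{k+\frac{1}{2}}) - F^*]$. Multiplying the two half-step contractions produces the claimed rate. The only genuinely delicate point is the step-size mismatch between the $L_i$ appearing in the descent bound and the $\mu_i$ appearing in the PL bound; the whole argument rests on the monotonicity $M \mapsto \mathcal{D}_i(\cdot, M)$ from Lemma~\ref{lem:1}, which itself presupposes the set condition~\eqref{sets-structure} (automatic, for instance, when each $Q_i = \R^{n_i}$).
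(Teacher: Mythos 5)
Your proof is correct and follows essentially the same route as the paper's: a block-wise smoothness upper bound whose minimum over $u \in Q_1$ equals $F(x^k) - \frac{1}{2L_1}\mathcal{D}_1(x^k, L_1)$ (the paper packages this via the prox point $T_{L_1}(x^k)$ together with the fact that exact block minimization can only do better), chained with the proximal-PL inequality and then repeated for the second block. If anything, you are more explicit than the paper about the delicate point, namely reconciling $\mathcal{D}_1(x^k, L_1)$ with $\mathcal{D}_1(x^k, \mu_1)$ via the monotonicity of Lemma~\ref{lem:1}, which the paper invokes only implicitly inside its final chain of inequalities.
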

\begin{proof}

By using Lipschitz continuity of the gradient of $f$ we have
\begin{align*} \label{prox-lip}
    F(T_{L_1}(x^{k})) & = F(T^1_{L_1}(x^{k}), x_2^k)) = f(T_{L_1}^1(x^{k}), x_2^k) + g_1(T_{L_1}^1(x^k)) + g_2(x_2^{k}) \nonumber\\
    &= f(T_{L_1}^1(x^{k}),x_2^k) + g_1(T_{L_1}^1(x^k)) + g_2(x_2^{k}) + g_1(x_1^k) - g_1(x_1^k)
    \\
    &\leq F(x^k) + \langle \nabla_1 f(x^k) , T_{L_1}^1(x^{k})-x_1^k \rangle + \frac{{L_1}}{2}||T_{L_1}^1(x^{k})-x_1^k||^2 \\
    &+ g_1(T_{L_1}^1(x^{k})) - g_1(x_1^k) \nonumber \\
	& \leq F(x^k) - \frac{1 }{ 2L_1} \mathcal{D}^1(x^k,{L_1}) \leq F(x^k) - \frac{ \mu_1}{{L_1}}[F(x^k) - F^*],
\end{align*}
which uses the definition of $T_M(x_{k})$ and $\mathcal{D}_1$ followed by the proximal-PL inequality~\eqref{prox-pl}. This subsequently implies that 
\begin{equation*}
    F(x^{k+\frac{1}{2}}) - F^*  \leq F(T_{L_1}(x^{k})) - F^* \leq  \left(1 - \frac{ \mu_1 }{L_1}\right) [ F(x_{k}) - F^*],
\end{equation*}
The same derivation for the point $x^{k+\frac{1}{2}}$ gives
\begin{equation*}
    F(x^{k+1}) - F^*  \leq F(T_{L_2}(x^{k+\frac{1}{2}})) - F^* \leq  \left(1 - \frac{ \mu_2 }{ L_2}\right) [ F(x^{k+\frac{1}{2}}) - F^*],
\end{equation*}
as well as the result of the theorem.
\end{proof}

Notice that above derivation does not require specification of what norm is used, so the above theorem guarantees that alternating minimization is better than the gradient methods w.r.t. any norm. In other words, alternating minimization pick up the geometric structure of the problem automatically and convergence rate of AM algorithm is not worse than the convergence rate  of the gradient method in the basis with the best possible condition number.

\subsubsection{Example with $\|x\|_A = \sqrt{\la Ax, x\ra}$}
As an example we consider here a norm endowed with a matrix.
The following result can be found in the 14-th chapter of \cite{Beck.ch14} or in \cite{Nesterov2014book}
\begin{equation}
    \|G_{M_2}^2(x^{k+\frac{1}{2}})\|^2_2  \leq 2L_2\left(f(x^{k+\frac{1}{2}})- f(x^{k+1})\right)
    \label{s.d.-1}
\end{equation}
\begin{equation}
    \|G_{M_1}^1(x^{k})\|^2_2  \leq 2L_1\left(f(x^{k})- f(x^{k+\frac{1}{2}})\right)
    \label{s.d.-2}
\end{equation}
where again we suppose that constant $L_1$ and $L_2$ can be different for different blocks:
\begin{multline*}
    f(u, v) \leqslant f(\xi, \eta)
    \\
    + \langle \nabla_1 f(\xi,\eta), u-\xi \rangle + \langle \nabla_2 f(\xi, \eta), v-\eta \rangle +  \frac{L_1}{2} \|u-\xi\|^2_2 + \frac{L_2}{2} \|v-\eta\|^2_2,
\end{multline*}
and the the constant in the regular definition of Lipshitz continuity of the gradient of $f$ is described by $L = \max(L_1, L_2)$.

Let also consider a norn endowed with a matrix:
\[\|x\|_A^2 = \la Ax, x\ra.\]
We can guarantee that the following holds for any matrix $A$
\begin{equation*}
    \|\nabla_2f(x^{k+\frac{1}{2}})\|^2_{A^{-1}}  \leq 2L^A_2\left(f(x^{k+\frac{1}{2}})- f(x^{k+1})\right)
\end{equation*}
\begin{equation*}
    \|\nabla_1f(x^{k})\|^2_{A^{-1}}  \leq 2L^A_1\left(f(x^{k})- f(x^{k+\frac{1}{2}})\right)
\end{equation*}
where

Let suppose that PL-conditions can be satisfied in the other basis
\begin{equation*}
    \mu_1^{B_1}\left(f(x^{k+\frac{1}{2}})- f(x^*)\right) \leq \|\nabla_1f(x^{k})\|^2_{B_1^{-1}}
\end{equation*}
\begin{equation*}
    \mu_2^{B_2}\left(f(x^{k+1})- f(x^*)\right) \leq \|\nabla_2f(x^{k+\frac{1}{2}})\|^2_{B_2^{-1}},
\end{equation*}
for all $B_1 \in \mathbf{B_1}$ and $B_2 \in \mathbf{B_2}$. Then
\[
    f(x^{k+1})- f(x^*) \leq \min_{B_2 \in \mathbf{B_2}} \left(1 - \frac{\mu_2^{B_2}}{L_2^{B_2}}\right)\times\min_{B_1 \in \mathbf{B_1}}\left(1 - \frac{\mu_1^{B_1}}{L_1^{B_1}}\right)\times \left(f(x^{k})- f(x^{*})\right).
\]

\subsection{Polynomial convergence}
Our analysis mainly relies on the fact that alternating step is not worse than any step of any method w.r.t the only block of variables, e.g.
\[ 
 f(x_1^{k+1}, x_2^k) = \min_{z \in Q_1} f(z, x_2^k) \leq f\left(\Step(x^k), x_2^k\right),
\]
since $x_1^{k+1} = \argmin_{z \in Q_1} f(z, x_2^k).$

In particular if $\Step(x)$ defined as gradient step w.r.t. $p$ norm
\[\Step^1(x) = \argmin_{u \in \R^{n_1}} f(x) + \la \nabla_1 f(x), u - x_1 \ra + \frac{L_p}{2}\|u-x_1\|^2_{p},\]
and
\[\Step^2(x) = \argmin_{v \in \R^{n_2}} f(x) + \la \nabla_2 f(x), v - x_2 \ra + \frac{L_p}{2}\|v-x_2\|^2_{p},\]
\cite{2013arXiv1304.2338K} guarantee that 
\[f(x^k) - f(\Step(x^{k})) \geq \frac{1}{2L_p}\|\nabla f(x^k)\|^2_{p*},\]
and
\[f(x^N) -f* \lesssim \frac{L_pR_p^2}{N},\]
where
\[R_p^2 = \max_{x: f(x) \leq f(x_0)}\|x-x*\|_p.\]
So for alternating minimization we can guarantee
\[f(x^N) -f* \leq \min_{p\in[1,\infty]}\frac{2L_pR_p^2}{N}\]

\section{Accelerated Alternating Minimization}
In this section we  describe accelerated method for alternating minimization, which is originates in \cite{nesterov2018primaldual}.
But before notice, that algorithm \ref{alg-1} does not use the constant of strong convexity and consequently adapts to strong convexity of the problem. If the problem is non-strongly convex or PL condition is not satisfied the  algorithm \ref{alg-1} possesses the following convergence rate 
\[f(x^N) - f_{opt} \leq \max \left\{ \frac{f(x_0) - f_{opt}}{2^{(N-1)/2}}, \frac{8\min(L_1,L_2)R^2}{N-1} \right\}.\]
The proof can be found in \cite{Beck.ch14}. The following algorithm requires the knowing of the parameter $\mu$ of strong convexity. But it is possible to use this method with $\mu=0$. In this case the algorithm turns exactly into algorithm 1 from \cite{2019arXiv190603622G}. The other interesting result that in the case method started with $\mu=0$ method automatically adapts to strong convexity of the problem and poses at least the same linear convergence rate as a gradient descent (see Lemma \ref{non-acc-conv}).

The set $\{1,\ldots, m\}$ of indices of the orthonormal basis vectors $\{e_i\}_{i=1}^m$ is divided into $n$ disjoint subsets (blocks) $I_k$, $k\in\{1,\ldots,n\}$. Let $S_k(x)=x+\spn\{e_i:\ i\in I_k\}$, i.e. the affine subspace containing $x$ and all the points differing from $x$ only over the block $k$. We use $x_i$ to denote the components of $x$ corresponding to the block $i$ and $\nabla_i f(x)$ to denote the gradient corresponding to the block $i$. We will further require that for any $k\in\{1,\ldots,n\}$ and any $z\in\mathbb{R}^m$ the problem $f(x)\to\min\limits_{x\in S_i(z)}$ has a solution, and this solution is easily computable.

\begin{algorithm}[H]
\caption{Accelerated Alternating Minimization (AAM)}
\label{AAM-2}

\begin{algorithmic}[1]
   \REQUIRE Starting point $x_0$
    \ENSURE $x^k$
   \STATE Set $A_0=0$, $x^0 = v^0$, $\tau_0 = 1$
   \FOR{$k \geqslant 0$}
	\STATE Set 
	\begin{equation}
	    \beta_k = \argmin\limits_{\beta\in [0,1]} f\left(x^k + \beta (v^k - x^k)\right)\label{line-search}
	\end{equation}
	\STATE Set $y^k = x^k + \beta_k (v^k - x^k)\quad $\COMMENT{Extrapolation step}
    \STATE Choose $i_k=\argmax\limits_{i\in\{1,\ldots,n\}} \|\nabla_i f(y^k)\|_2$
\STATE Set $x^{k+1}=\argmin\limits_{x\in S_{i_k}(y^k)} f(x)$\quad \COMMENT{Block minimization}
\STATE 
 If L is known choose $a_{k+1}$ s.t. 
$\frac{a_{k+1}^2}{(A_{k}+a_{k+1})(\tau_{k}+\mu a_{k+1})} = \frac{1}{Ln}$
\\
If L is unknown, find largest $a_{k+1}$ from the equation 
\begin{multline}
    f(y^k) - \frac{a_{k+1}^2}{2(A_{k} + a_{k+1})(\tau_{k}+\mu a_{k+1})}\| \nabla f(y^k) \|_2^2 +
    \\
    \frac{\mu \tau_k a_{k+1}}{2(A_{k} + a_{k+1})(\tau_{k}+\mu a_{k+1})}\| v^k - y^k \|_2^2 = f(x^{k+1})
    \label{aam-s.d.}
\end{multline}

\STATE  Set $A_{k+1} = A_{k} + a_{k+1}$, $\tau_{k+1} = \tau_k + \mu a_{k+1}$
\STATE Set $v^{k+1} = v^{k}-a_{k+1}\nabla f(y^k)$. \COMMENT{Update momentum term}
\ENDFOR
\end{algorithmic}

\end{algorithm}

We will begin with one key Lemma. Let us introduce an auxiliary functional sequence defined as \[\psi_0(x)=\frac{1}{2}\|x-x^0\|_2^2,\]

\[\psi_{k+1}(x) = \psi_{k}(x) + a_{k+1}\{f(y^k) + \langle \nabla f(y^k), x - y^k \rangle\ + \frac{\mu}{2}\| x - y^k \|_2^2\}.\]

For 
\[ l_k(x) = \sum_{i=0}^k a_{i+1} \{ f(y^i) + \langle \nabla f(y^i), x - y^i \rangle\ + \frac{\mu}{2}\| x - y^i \|_2^2 \} \]
we can write
\[
\psi_{k+1}(x) = \psi_{0}(x) + l_k(x)
\]
It is easy to see that $\psi_k(x)$ is $\tau_k$ strongly convex function with \[\tau_k = 1 + \mu\sum_{i=0}^k a_i = 1 + \mu A_k.\]
\begin{lemma} 
\label{AAM-2_Ak_rate}
After $k$ steps of Algorithm \ref{AAM-2} it holds that

\begin{equation}
    \label{eq:main_recurrence}
    A_{k}f(x^{k}) \leqslant \min_{x \in \mathbb{R}^m} \psi_{k}(x) = \psi_{k}(v^{k}).
\end{equation}
Moreover, if the objective is $L$-smooth and $\mu$-strongly convex 
\[A_k \geqslant \max \left\{\frac{k^2}{4Ln}, \frac{1}{nL}\left(1 - \sqrt{\frac{\mu}{nL}}\right)^{-k-1}\right\},\]
where $n$ is the number of blocks.
\end{lemma}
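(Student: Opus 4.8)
The plan is to establish the two assertions of the lemma separately: the recurrence \eqref{eq:main_recurrence} by an estimate‑sequence induction on $k$, and the lower bounds on $A_k$ by telescoping the relation that defines $a_{k+1}$. For the recurrence I would prove by induction that $A_k f(x^k)\leq \psi_k^\ast := \min_x \psi_k(x)=\psi_k(v^k)$. The base case $k=0$ is immediate because $A_0=0$ and $\psi_0^\ast=0$. For the inductive step the key structural facts are that $\psi_k$ is $\tau_k$‑strongly convex with minimizer $v^k$, so $\psi_k(x)\geq \psi_k^\ast + \frac{\tau_k}{2}\|x-v^k\|_2^2$, and that by the inductive hypothesis $\psi_k^\ast \geq A_k f(x^k)$. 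Substituting these into the definition of $\psi_{k+1}$ and replacing $A_k f(x^k)$ by the linear minorant $A_k\big(f(y^k)+\langle \nabla f(y^k),x^k-y^k\rangle\big)$, valid by convexity of $f$, yields a quadratic lower bound on $\psi_{k+1}(x)$ whose quadratic coefficient is exactly $\tfrac{1}{2}(\tau_k+\mu a_{k+1})=\tfrac{1}{2}\tau_{k+1}$.

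Minimizing this quadratic over $x$, I would check that its minimizer is precisely $v^{k+1}$ from Algorithm~\ref{AAM-2}, and that the minimal value splits into the constant $A_{k+1}f(y^k)$, the gradient contribution $-\frac{a_{k+1}^2}{2\tau_{k+1}}\|\nabla f(y^k)\|_2^2$, and a cross term $\frac{\mu\tau_k a_{k+1}}{2\tau_{k+1}}\|v^k-y^k\|_2^2$. The last term comes from the identity $\min_x\big[\tfrac{\tau_k}{2}\|x-v^k\|^2+\tfrac{\mu a_{k+1}}{2}\|x-y^k\|^2\big]=\frac{\mu\tau_k a_{k+1}}{2\tau_{k+1}}\|v^k-y^k\|^2$. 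Using $(A_k+a_{k+1})(\tau_k+\mu a_{k+1})=A_{k+1}\tau_{k+1}$, this lower bound is exactly $A_{k+1}$ times the left side of \eqref{aam-s.d.}. By the choice of $a_{k+1}$ this equals $A_{k+1}f(x^{k+1})$: in the unknown‑$L$ branch by definition, and in the known‑$L$ branch via the block‑minimization decrease $f(x^{k+1})\leq f(y^k)-\frac{1}{2Ln}\|\nabla f(y^k)\|_2^2$, which follows since $x^{k+1}$ minimizes $f$ over the block $i_k$ chosen to maximize $\|\nabla_i f(y^k)\|_2$, so $\|\nabla_{i_k} f(y^k)\|_2^2\geq \tfrac1n\|\nabla f(y^k)\|_2^2$. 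This closes the induction and also gives $\min_x\psi_{k+1}=\psi_{k+1}(v^{k+1})$.

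For the rate I would start from the defining relation $a_{k+1}^2=\frac{1}{nL}A_{k+1}\tau_{k+1}$ with $A_{k+1}=A_k+a_{k+1}$ and $\tau_{k+1}=1+\mu A_{k+1}$. For the $k^2$ bound I drop the strong‑convexity contribution through $\tau_{k+1}\geq 1$, obtaining $a_{k+1}=A_{k+1}-A_k\geq \sqrt{A_{k+1}/(nL)}$, hence $\sqrt{A_{k+1}}-\sqrt{A_k}\geq \frac{1}{2\sqrt{nL}}$; telescoping from $A_0=0$ gives $A_k\geq \frac{k^2}{4nL}$. For the linear bound I instead use $\tau_{k+1}\geq \mu A_{k+1}$ to get $a_{k+1}\geq \sqrt{\mu/(nL)}\,A_{k+1}$, i.e. $A_{k+1}\geq A_k\big(1-\sqrt{\mu/(nL)}\big)^{-1}$, and iterate this geometric growth from the base value $A_1$ obtained by solving the $k=0$ instance of the coefficient equation.

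The hard part will be the algebraic bookkeeping in the inductive step: one must verify that the minimized quadratic lower bound reproduces \eqref{aam-s.d.} term by term, including the exact appearance of the factor $A_{k+1}\tau_{k+1}$ and of the cross term in $\|v^k-y^k\|_2^2$, which is where the precise form of the $v^{k+1}$ update and of the coefficient equation must be pinned down. A secondary delicate point is matching the exact constant and exponent $\big(1-\sqrt{\mu/(nL)}\big)^{-k-1}$ in the linear bound, which forces one to track the base case $A_1$ carefully.
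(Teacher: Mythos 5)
Your plan follows the same architecture as the paper's proof (estimate-sequence induction for \eqref{eq:main_recurrence}, the block-minimization decrease $f(x^{k+1})\leq f(y^k)-\frac{1}{2Ln}\|\nabla f(y^k)\|_2^2$ via the pigeonhole choice of $i_k$, and two telescoping arguments for $A_k$), and your treatment of the growth rates is correct. However, the inductive step as written has a genuine gap. After replacing $A_k f(x^k)$ by the convexity minorant $A_k\left(f(y^k)+\langle\nabla f(y^k),x^k-y^k\rangle\right)$ and minimizing the resulting quadratic, the minimal value is \emph{not} the three-term expression you claim: it equals
\begin{multline*}
A_{k+1}f(y^k)
+A_k\langle\nabla f(y^k),x^k-y^k\rangle
+\frac{a_{k+1}\tau_k}{\tau_{k+1}}\langle\nabla f(y^k),v^k-y^k\rangle\\
-\frac{a_{k+1}^2}{2\tau_{k+1}}\|\nabla f(y^k)\|_2^2
+\frac{\mu\tau_k a_{k+1}}{2\tau_{k+1}}\|v^k-y^k\|_2^2.
\end{multline*}
The two inner-product terms do not cancel here, because $\beta_k$ is chosen by the line search \eqref{line-search} rather than as $a_{k+1}/A_{k+1}$, so the identity $A_k(x^k-y^k)+a_{k+1}(v^k-y^k)=0$ from the standard accelerated-gradient analysis is unavailable. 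Closing the induction requires showing that both inner products are nonnegative, and that is exactly what the optimality conditions of the line search provide: in the three cases $\beta_k=1$, $\beta_k\in(0,1)$, $\beta_k=0$ one gets $\langle\nabla f(y^k),v^k-y^k\rangle\geq 0$ and $\langle\nabla f(y^k),x^k-y^k\rangle\geq 0$, after which both terms may be dropped. Your proposal never invokes the line search anywhere, yet it is the essential mechanism of the algorithm; the paper devotes a dedicated case analysis to precisely this point. (The paper also sidesteps one of the two terms by using $f(x^k)\geq f(y^k)$ — valid since $\beta=0$ is feasible in the line search — instead of the convexity minorant, but it still needs the case analysis for $\langle\nabla f(y^k),v^k-y^k\rangle\geq 0$.)

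Two smaller remarks. First, the minimizer of the lower-bounding quadratic is $\frac{1}{\tau_{k+1}}\left(\tau_k v^k+\mu a_{k+1}y^k-a_{k+1}\nabla f(y^k)\right)$, which coincides with the update $v^{k+1}=v^k-a_{k+1}\nabla f(y^k)$ written in Algorithm~\ref{AAM-2} only when $\mu=0$; for the induction you should simply take $v^{k+1}$ to be the minimizer of $\psi_{k+1}$, as the lemma's statement implicitly does. Second, iterating $A_{k+1}\geq\left(1-\sqrt{\mu/(nL)}\right)^{-1}A_k$ from $A_1\geq\frac{1}{nL}$ yields the exponent $-(k-1)$, not $-k-1$; the paper's own proof arrives at the same $-(k-1)$, so the exponent in the lemma's statement appears to be a typo and you should not expect to reach it by this route.
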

\begin{proof}

First, we prove inequality \eqref{eq:main_recurrence} by induction over $k$. For $k=0$, the inequality holds. Assume that 
\[A_{k}f(x^{k}) \leqslant \min_{x \in \mathbb{R}^m} \psi_{k}(x) = \psi_{k}(v^{k}).\]
Then
\begin{multline*}
    \psi_{k+1}(v^{k+1}) = \min_{x \in \mathbb{R}^m} \Bigg\{ \psi_{k}(x) + a_{k+1}\{f(y^k) + \langle \nabla f(y^k), x - y^k \rangle + \frac{\mu}{2}\| x - y^k \|_2^2\} \Bigg\}
    \\
    \geqslant \min_{x \in \mathbb{R}^m} \Bigg\{ \psi_{k}(v^k) + \frac{\tau_k}{2}\| x - v^k \|_2^2 + a_{k+1}\{f(y^k) + \langle \nabla f(y^k), x - y^k \rangle 
    \\
    \shoveright{+ \frac{\mu}{2}\| x - y^k \|_2^2\} \Bigg\}}
    \\
    \geqslant \min_{x \in \mathbb{R}^m} \Bigg\{ A_{k}f(x^k) + \frac{\tau_k}{2}\| x - v^k \|_2^2 + a_{k+1}\{f(y^k) + \langle \nabla f(y^k), x - y^k \rangle 
    \\
    + \frac{\mu}{2}\| x - y^k \|_2^2\} \Bigg\}
\end{multline*}
Here we used that $\psi_{k}$ is a strongly convex function with minimum at $v^k$ and that $f(y^k)\leqslant f(x^k)$. 

By the optimality conditions for the problem $\min\limits_{\beta\in [0,1]} f\left(x^k + \beta (v^k - x^k)\right)$, either
\begin{enumerate}
    \item $\beta_k = 1$, $\langle \nabla f(y^k),x^k - v^k \rangle \geqslant 0$, $y^k = v^k$;
    \item $\beta_k \in (0,1)$ and $\langle \nabla f(y^k),x^k - v^k \rangle = 0$, $y^k = v^k + \beta_k (x^k - v^k)$;
    \item $\beta_k = 0$ and $\langle \nabla f(y^k),x^k - v^k \rangle \leqslant 0$, $y^k = x^k$ .
\end{enumerate}In all three cases,  $\langle \nabla f(y^k), v^k - y^k \rangle \geqslant 0$.

Thus 
\begin{multline*}
    \psi_{k+1}(v^{k+1}) \geqslant  \min_{x \in \mathbb{R}^m} \Big\{ A_{k}f(y^k) + \frac{\tau_k}{2}\| x - v^k \|_2^2 + a_{k+1}\{f(y^k) + \langle \nabla f(y^k), x - y^k \rangle
    \\
    + \frac{\mu}{2}\| x - y^k \|_2^2\} \Big\}
\end{multline*}
The explicit solution to the above quadratic optimization problem is
\[
x=\frac{1}{\tau_{k+1}}(\tau_k v^k + \mu a_{k+1}y^k - a_{k+1} \nabla f(y^k))
\]

By plugging in the solution and using $\langle \nabla f(y^k), v^k - y^k \rangle \geqslant 0$, we obtain

\begin{align*}
    \psi_{k+1}(v^{k+1}) &\geqslant A_{k+1}f(y^k) - \frac{a_{k+1}^2}{2\tau_{k+1}}\| \nabla f(y^k) \|_2^2 + \frac{\mu \tau_k a_{k+1}}{2\tau_{k+1}}\| v^k - y^k \|_2^2.
\end{align*}

Our next goal is to show that 

\begin{align*}
    A_{k+1}f(y^k) - \frac{a_{k+1}^2}{2\tau_{k+1}}\| \nabla f(y^k) \|_2^2 + \frac{\mu \tau_k a_{k+1}}{2\tau_{k+1}}\| v^k - y^k \|_2^2 &\geqslant A_{k+1}f(x^{k+1})
\end{align*}

which proves the induction step.

To do this, by the $L$-smoothness of the objective, we have $\forall i$
\[
f(y^k)-\frac{1}{2L}\|\nabla_i f(y^k)\|_2^2\geqslant f(x_i^{k+1}),
\]
where $x_i^{k+1}=\argmin_{x\in S_{i}} f(x)$. Since $i_k=\argmax_{i} \|\nabla_i f(y^k)\|_2^2$, $$\|\nabla_{i_k} f(y^k)\|_2^2\geqslant \frac{1}{n}\|\nabla f(y^k)\|_2^2$$ 
and
$$f(y^k)-\frac{1}{2Ln}\|\nabla f(y^k)\|_2^2\geqslant f(y^k)-\frac{1}{2L}\|\nabla_{i_k} f(y^k)\|_2^2\geqslant f(x^{k+1}),$$
Choosing $a_{k+1}$ such that $\frac{a_{k+1}^2}{2A_{k+1}\tau_{k+1}}\geqslant \frac{1}{2Ln}$ implies 

\begin{multline*}
    A_{k+1}f(y^k) - \frac{a_{k+1}^2}{2\tau_{k+1}}\| \nabla f(y^k) \|_2^2 + \frac{\mu \tau_k a_{k+1}}{2\tau_{k+1}}\| v^k - y^k \|_2^2 
    \\
    \geqslant A_{k+1}f(y^k) - \frac{a_{k+1}^2}{2\tau_{k+1}}\| \nabla f(y^k) \|_2^2  
    \geqslant A_{k+1}f(y^k) - \frac{A_{k+1}}{2Ln} \| \nabla f(y^k) \|_2^2     
    \\
    \geqslant A_{k+1}f(x^{k+1})
\end{multline*}
which proves the induction step.

Rewriting the rule for choosing $a_{k+1}$ gives $\frac{a_{k+1}^2}{(A_{k}+a_{k+1})(\tau_{k}+\mu a_{k+1})}\geqslant \frac{1}{Ln}$.

Let us estimate the rate of the growth for $A_k$. $\tau_k = 1 + \mu\sum_{i=0}^k a_i = 1 + \mu A_k$. $\frac{a_{k+1}^2}{2A_{k+1}\tau_{k+1}}\geqslant \frac{1}{2Ln}$
\begin{align*}
	a^2_{k} & \geqslant 
	\frac{A_k \tau_k}{{nL}} = \frac{{A_k + \mu A_k^2}}{nL}
\end{align*}

\begin{align}
	a_{k} & \geqslant 
	\frac{1}{\sqrt{nL}} \sqrt{A_k + \mu A_k^2}
	\geqslant 
	\sqrt{\frac{\mu}{2Ln}}A_k
\end{align}

\begin{equation*}
    \sqrt{A_i} - \sqrt{A_{i-1}}
    \geqslant \frac{A_i-A_{i-1}}{\sqrt{A_i} + \sqrt{A_{i-1}}} 
    \geqslant \frac{a_i}{2\sqrt{A_i}}
    \geqslant \frac{\sqrt{1+\mu A_i}}{2\sqrt{Ln}}
\end{equation*}

Summing it up for $i=1,\dots,k$ we get \[A_k \geqslant \frac{k^2}{4Ln}\]

We also have \[A_{k+1} = A_{k} + a_{k+1} \geqslant A_k + \sqrt{\frac{\mu}{nL}}A_{k+1}\]
which leads to \[A_{k+1} \geqslant \left(1 - \sqrt{\frac{\mu}{nL}}\right)^{-1}A_{k}\]

To use this bound we only need to estimate A1 , which we can do as follows:
\[A_1 = \frac{a_1^2}{A_1} \geqslant  \frac{a_1^2} {(1 + \mu A_1)A1} \geqslant  \frac{a_1^2}{A_1 \tau_1} \geqslant \frac{1}{nL}\]

By recursively applying the last bound we reach the desired result:
\[A_k \geqslant \max \left\{\frac{k^2}{4Ln}, \frac{1}{nL}\left(1 - \sqrt{\frac{\mu}{nL}}\right)^{-k+1}\right\}\]

\end{proof}
\begin{theorem} 
After $k$ steps of Algorithm \ref{AAM-2} it holds that

\begin{equation}
    \label{eq:main_result}
    f(x^k) - f(x_*) \leqslant nLR^2\min \left\{ \frac{4}{k^2}, \left(1 - \sqrt{\frac{\mu}{nL}}\right)^{k-1}\right\}
\end{equation}
\end{theorem}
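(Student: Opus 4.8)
The plan is to feed the two conclusions of Lemma~\ref{AAM-2_Ak_rate} into one another: the estimating-sequence bound $A_k f(x^k) \leqslant \min_{x} \psi_k(x)$ and the growth estimates for $A_k$. First I would overbound the minimum by the value of $\psi_k$ at the minimizer $x_*$ of $f$, writing
\[
A_k f(x^k) \leqslant \psi_k(v^k) = \min_{x \in \mathbb{R}^m} \psi_k(x) \leqslant \psi_k(x_*).
\]

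The crucial step is to control $\psi_k(x_*)$ from above. Using the representation $\psi_k(x) = \frac{1}{2}\|x - x^0\|_2^2 + \sum_{i=0}^{k-1} a_{i+1}\big\{ f(y^i) + \langle \nabla f(y^i), x - y^i\rangle + \frac{\mu}{2}\|x - y^i\|_2^2 \big\}$, I would invoke $\mu$-strong convexity of $f$, which makes every bracketed term a global minorant of $f$; in particular each summand evaluated at $x_*$ is at most $f(x_*)$. Since $\sum_{i=0}^{k-1} a_{i+1} = A_k$, this collapses the whole tail and yields
\[
\psi_k(x_*) \leqslant \frac{1}{2}\|x_* - x^0\|_2^2 + A_k f(x_*).
\]

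Combining the two displays and cancelling $A_k f(x_*)$ gives $A_k\big(f(x^k) - f(x_*)\big) \leqslant \frac{1}{2}\|x_* - x^0\|_2^2$, so that, writing $R = \|x_* - x^0\|_2$,
\[
f(x^k) - f(x_*) \leqslant \frac{R^2}{2 A_k}.
\]
It then remains to substitute the two lower bounds on $A_k$ from Lemma~\ref{AAM-2_Ak_rate}. The bound $A_k \geqslant k^2/(4Ln)$ produces the sublinear term $O(nLR^2/k^2)$, while $A_k \geqslant \frac{1}{nL}\big(1 - \sqrt{\mu/(nL)}\big)^{-k+1}$ produces the linear term $O\big(nLR^2(1 - \sqrt{\mu/(nL)})^{k-1}\big)$; both are dominated by the right-hand side of~\eqref{eq:main_result}, and retaining whichever lower bound on $A_k$ is larger yields the minimum of the two rates.

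The main obstacle is the second paragraph: one has to notice that strong convexity turns each affine-plus-quadratic summand of $\psi_k$ into a minorant of $f$, so that evaluating at $x_*$ lets the entire linear sequence telescope into $A_k f(x_*)$ against only the fixed initial term $\frac{1}{2}\|x_* - x^0\|_2^2$. Everything else is bookkeeping, provided one is careful with the index shift relating $\psi_k$, the coefficients $a_{i+1}$, and the partial sums $A_k$ (the constants coming out slightly sharper than stated, so the claimed bound follows with room to spare).
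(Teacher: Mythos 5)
Your proposal is correct and follows essentially the same route as the paper: bound $A_k f(x^k) \leqslant \psi_k(v^k) \leqslant \psi_k(x_*)$, use the fact that each term $f(y^i) + \langle \nabla f(y^i), x_* - y^i\rangle + \tfrac{\mu}{2}\|x_*-y^i\|_2^2$ minorizes $f(x_*)$ to collapse $\psi_k(x_*)$ to $\tfrac{1}{2}R^2 + A_k f(x_*)$, and then insert the growth bounds on $A_k$ from Lemma~\ref{AAM-2_Ak_rate}. Your explicit appeal to $\mu$-strong convexity (rather than the paper's nominal ``convexity'') is the accurate justification for that minorization step, and your observation that the constants close with a factor-of-two margin matches the paper's computation.
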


\begin{proof}
From the convexity of $f(x)$ we have
\begin{multline*}l_k(x_*) = \sum_{i=0}^{k} a_{i+1}(f(y^i)+\langle\nabla f(y^i),x_*-y^i\rangle + \frac{\mu}{2}\| x_* - y^i \|_2^2) 
\leqslant A_{k+1} f(x_*).
\end{multline*}
From Lemma~\eqref{AAM-2_Ak_rate} we have 
\begin{multline*}
    A_k f(x^k)\leqslant\psi_{k}(v^k)
    \leqslant\psi_k(x_*)=\frac{1}{2}\|x_*-x^0\|_2^2 \notag \\
    +\sum_{i=0}^{k-1} a_{i+1}(f(y^i)+\langle\nabla f(y^i),x_*-y^i\rangle + \frac{\mu}{2}\| x_* - y^i \|_2^2)
    \leqslant 
     A_k f(x_*)+\frac{1}{2}\|x_*-x^0\|_2^2
\end{multline*}

\begin{equation*}
    f(x^k) - f(x_*) \leqslant \frac{R^2}{2A_k} \leqslant nLR^2\min \left\{ \frac{4}{k^2}, \left(1 - \sqrt{\frac{\mu}{nL}}\right)^{k-1}\right\}.
\end{equation*}
\end{proof}

The other observation explains behaviour of this method when $\mu$ is unknown.
\begin{lemma}
    Algorithm \ref{AAM-2} started with $\mu=0$ automatically adapts to strong convexity of the problem and has linear convergence:
    \[ f(x^{k+1}) - f(x^*) \leq \Pi_{i=0}^{k-1}  \left(1 - \frac{\mu}{\hat L_i}\right) \cdot (f(x^{0})- f(x^{*})),\]
    where $\hat L_i = \frac{A_{i} + a_{i+1}}{a_{i+1}^2}$ is the upper bound on $L$ at the $i$-th iteration.
    \label{non-acc-conv}
\end{lemma}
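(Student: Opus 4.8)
The plan is to bypass the acceleration/estimate-sequence machinery entirely and instead extract a plain one-step contraction, feeding the \emph{true} strong convexity of $f$ into the descent analysis even though the algorithm itself is run with $\mu=0$. First I would specialize Algorithm~\ref{AAM-2} to $\mu=0$: then $\tau_k\equiv 1$, the update $\tau_{k+1}=\tau_k+\mu a_{k+1}$ keeps $\tau$ constant, and the step-size equation \eqref{aam-s.d.} collapses (its middle term vanishing) to the sufficient-decrease relation $f(x^{k+1}) = f(y^k) - \frac{1}{2\hat L_k}\|\nabla f(y^k)\|_2^2$ with $\hat L_k = (A_k+a_{k+1})/a_{k+1}^2 = A_{k+1}/a_{k+1}^2$. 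This is an equality when $a_{k+1}$ is found by line search, and an inequality $\leq$ in the case $L$ is known (where $\hat L_k=nL$). Either way it is exactly the descent guarantee of one block-minimization step, so none of the $A_k$-growth analysis of Lemma~\ref{AAM-2_Ak_rate} is actually needed for this statement.

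Next I would inject strong convexity through the PL inequality, which strong convexity implies, namely $\|\nabla f(y^k)\|_2^2 \geq 2\mu\,(f(y^k) - f(x^*))$. Substituting this into the sufficient-decrease relation yields the per-iteration contraction
\[
f(x^{k+1}) - f(x^*) \leq \left(1 - \frac{\mu}{\hat L_k}\right)\bigl(f(y^k) - f(x^*)\bigr).
\]
To replace $y^k$ by $x^k$ I would use that $\beta=0$ is feasible in the line search \eqref{line-search}, so $f(y^k)\leq f(x^k)$, which gives $f(x^{k+1}) - f(x^*) \leq (1 - \mu/\hat L_k)(f(x^k) - f(x^*))$. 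Multiplying these one-step contractions across the iterations then telescopes to the stated product bound, and the adaptivity claim is precisely the observation that this linear rate appears even though $\mu$ never enters the algorithm's updates.

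The one point that needs care — and the main obstacle — is ensuring the contraction factor is genuinely in $[0,1)$, i.e.\ that $\hat L_k \geq \mu$, since otherwise the bound would be vacuous. I expect this to be automatic: from $f(x^{k+1}) \geq f(x^*)$ together with the sufficient-decrease relation one gets $\frac{1}{2\hat L_k}\|\nabla f(y^k)\|_2^2 \leq f(y^k) - f(x^*)$, and combining this with the PL inequality above forces $\mu/\hat L_k \leq 1$ whenever $f(y^k) > f(x^*)$ (the case $f(y^k)=f(x^*)$ being trivial). A secondary check is that the reduction of \eqref{aam-s.d.} at $\mu=0$ really produces the claimed $\hat L_k$ and that the line search returns a finite $a_{k+1}$; this follows from $L$-smoothness together with the $\argmax$ block selection, which guarantee the decrease is attainable with $\hat L_k \leq nL$.
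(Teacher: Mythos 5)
Your proposal is correct and follows essentially the same route as the paper's own proof: the step-size equation \eqref{aam-s.d.} at $\mu=0$ gives the sufficient-decrease relation, the PL inequality implied by strong convexity converts it into a per-iteration contraction, the line-search property $f(y^k)\leq f(x^k)$ closes the recursion, and telescoping yields the product bound. Your only additions are cosmetic — you chain the contraction over the $x$-iterates rather than the $y$-iterates, and you explicitly verify $1-\mu/\hat L_k\geq 0$ (a point the paper's proof uses implicitly when telescoping but never states).
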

\begin{proof}
    (\ref{aam-s.d.}) with $\mu=0$ implies sufficient decrease result:
    \begin{equation*}
        f(y^k) - \frac{a_{k+1}^2}{2(A_{k} + a_{k+1})} \| \nabla f(y^k) \|_2^2  = f(x^{k+1}) \geq f(y^{k+1})
    \end{equation*}
    since (\ref{line-search}) implies that $f(y^k)\leq f(x^k)$.
    By combining this result with PL-condition
    \begin{equation*}
         \|\nabla f(y^k)\|_2^2  \geq 2\mu\left(F(y^{k})- F(x^*)\right)
    \end{equation*}
    we have linear convergence
    \begin{multline*}
        \left(f(y^{k+1})- f(x^*)\right) \leq \left(1 - \frac{\mu a_{k+1}^2}{A_{k} + a_{k+1}}\right) \left(f(y^{k})- f(x^{*})\right) 
        \\
        \leq \Pi_{i=0}^{k}  \left(1 - \frac{\mu a_{i+1}^2}{A_{i} + a_{i+1}}\right) \left(f(x^{0})- f(x^{*})\right)
    \end{multline*}
    And finally block minimization step guarantees that $f(x^{k+1})\leq f(y^k)$, so we have
    \[ f(x^{k+1}) - f(x^*) \leq \Pi_{i=0}^{k-1}  \left(1 - \frac{\mu a_{i+1}^2}{A_{i} + a_{i+1}}\right) \left(f(x^{0})- f(x^{*})\right)\]
\end{proof}

\section{Application}
Consider the following problem of minimazing a quadratic function
\begin{equation}
    f(z) = \|Wz-b\|_2^2 \rightarrow \min_z
    \label{qprob}
\end{equation}
this is a strongly convex function with $\mu = \lambda_{\min}(W^T W).$

This problem can be solved with algorithm \ref{alg-1} by splitting the vector variable $z$ into two vector variables with the dimension:
\[
    z = 
    \begin{pmatrix}
        x
        \\
        y
    \end{pmatrix}.
\]
Then split matrix $W$ into four blocks with the same size
\[
    W = 
    \begin{pmatrix}
        A B
        \\
        C D
    \end{pmatrix}.
\]
and vector $b$
\[
    b = 
    \begin{pmatrix}
        d
        \\
        c
    \end{pmatrix}.
\]
The equivalent problem to (\ref{qprob}) is
\[ \|Ax + By - c\|_2^2 + \| Cx + Dy -d\|_2^2 \rightarrow \min_{x,y}\]
and the iterations of the algorithm \ref{alg-1} can be written explicitly
\begin{align*}
    x^{k+1} &= (A^T A + C^T C)^{-1}\big[A^T(c-By^k) + C^T(d-Dy^k)\big]
    \\
    y^{k+1} &= (B^T B + D^T D)^{-1}\big[B^T(c-Ax^k) + D^T(d-Cx^k)\big]
\end{align*}

Next we provide comparison between Algorithm \ref{alg-1}, Algorithm \ref{AAM-2} started with $\mu=0$ and $\mu=\mu^*$, and the following accelerated algorithm
\begin{algorithm}[H]
\caption{Fast Gradient Method (FGM)}

\begin{algorithmic}[1]
   \REQUIRE Starting point $z_0$.
    \ENSURE $z^k$
    \STATE Set $v^0 = z^0$.
    \FOR{$k \geqslant 0$}
        \STATE $z^{k+1} = v^k - \frac{1}{L} \nabla f(v^k)$
        \STATE $v^{k+1} = z^k + \frac{k}{k+3} (z^{k+1}-z^k)$
\ENDFOR
\end{algorithmic}
\end{algorithm}

\begin{figure}
\begin{center}
\includegraphics[width=8.4cm]{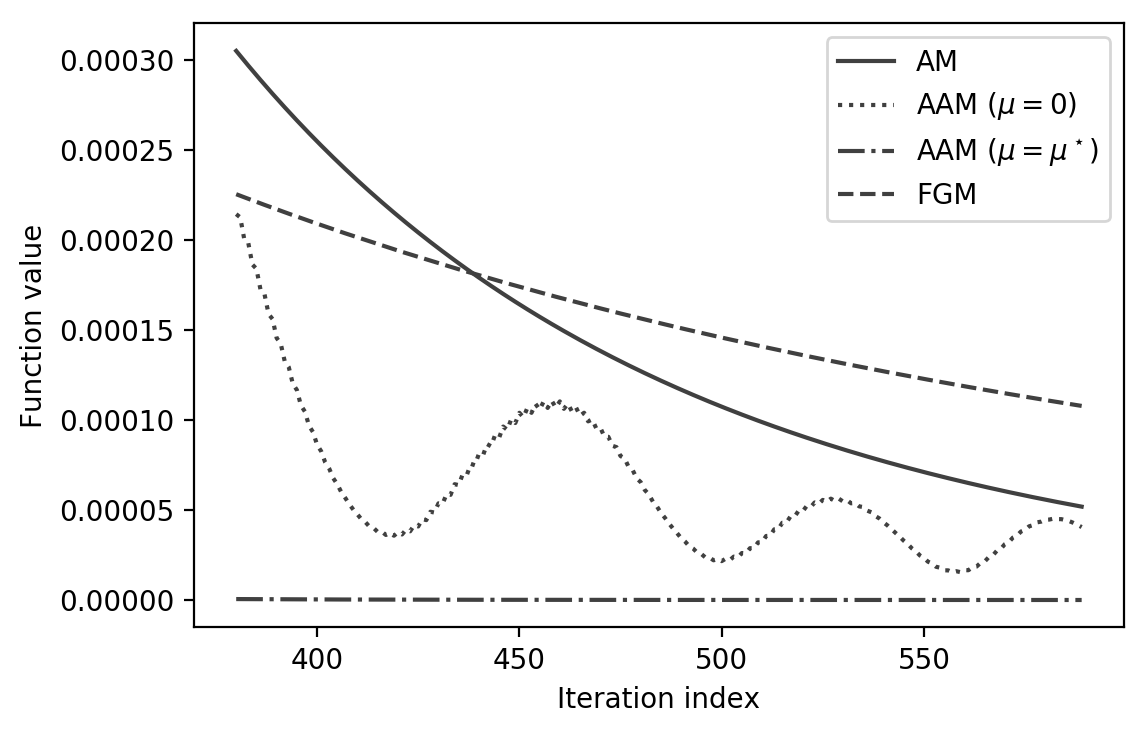}    
\caption{Comparison for quadratic function} 
\label{fig-init}
\end{center}
\end{figure}

\section{Linear convergence under general convex constraint sets}
The proof of linear convergence relies on Lemma~\ref{lem:1}, which requires special structure of constraint sets~\eqref{sets-structure}. For general convex constraints, we was able to prove only a weaker result, which is presented in this section. 

The following lemma is used instead of Theorem~\ref{th:linear} and Lemma~\ref{lem:1}.

\begin{lemma} Strong convexity of $f$ implies "nearly" PL-condition:
\begin{equation}
    \mu_1\left(F(x^{k+\frac{1}{2}})- F(x^*)\right) \leq \frac{1}{2}\|G_{M_1}^1(x^{k})\|^2_2
\end{equation}
\begin{equation}
    \mu_2\left(F(x^{k+1})- F(x^*)\right) \leq \frac{1}{2} \|G_{M_2}^2(x^{k+\frac{1}{2}})\|^2_2
\end{equation}
\end{lemma}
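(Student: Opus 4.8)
The plan is to prove the two displayed inequalities symmetrically; I describe the first, \(\mu_1\big(F(x^{k+\frac12})-F^*\big)\le\frac12\|G^1_{M_1}(x^k)\|_2^2\), the second following after interchanging the two blocks and the points \(x^k,x^{k+\frac12}\). First I would reduce the half-step suboptimality to that at \(x^k\): since \(x_1^{k+1}\) minimizes \(F(\cdot,x_2^k)\) over \(Q_1\), we have \(F(x^{k+\frac12})=\min_{z\in Q_1}F(z,x_2^k)\le F(x^k)\), so it is enough to bound \(F(x^k)-F^*\) by \(\frac1{2\mu_1}\|G^1_{M_1}(x^k)\|_2^2\). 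I would also invoke the lemma giving \(G^2_{M_2}(x^k)=0\) and \(T^2_{M_2}(x^k)=x_2^k\): at \(x^k\) the second block is already optimal, so \(0\in\partial_2 F(x^k)\) and \(F\) has a subgradient at \(x^k\) with vanishing second-block component.

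Next I would rerun the strong-convexity computation of Theorem~\ref{th:prox-pl}, but keep a subgradient rather than passing to \(\mathcal D_1\). From block strong convexity of \(f\) between \(x^k\) and an arbitrary \((u,v)\), adding \(g_1+g_2\) and cancelling the second block via~\eqref{opt-0.1} and convexity of \(g_2\) (its steps \circled{1}--\circled{2}), I get
\[
F(u,v)\ge F(x^k)+\langle\nabla_1 f(x^k),u-x_1^k\rangle+g_1(u)-g_1(x_1^k)+\tfrac{\mu_1}{2}\|u-x_1^k\|_2^2
\]
for all \(u\in Q_1,\ v\in Q_2\). Evaluating at \((u,v)=x^*\) and using convexity of \(g_1\) to absorb \(g_1(x_1^*)-g_1(x_1^k)\) into an inner product, for every \(s_1\in\partial_1F(x^k)\) this becomes
\[
F(x^k)-F^*\le-\langle s_1,x_1^*-x_1^k\rangle-\tfrac{\mu_1}{2}\|x_1^*-x_1^k\|_2^2\le\tfrac{1}{2\mu_1}\|s_1\|_2^2,
\]
the last bound maximizing the concave quadratic in the displacement \(x_1^*-x_1^k\) (optimum \(-s_1/\mu_1\)). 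Minimizing over \(s_1\) and combining with the first step yields \(\mu_1\big(F(x^{k+\frac12})-F^*\big)\le\tfrac12\operatorname{dist}\!\big(0,\partial_1F(x^k)\big)^2\).

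The hard part is the final passage from the minimal block-1 subgradient to the prox-gradient mapping \(G^1_{M_1}(x^k)\). Optimality of the prox point \(T:=T^1_{M_1}(x^k)\) gives \(G^1_{M_1}(x^k)-\nabla_1 f(x^k)\in\partial g_1(T)\), i.e.\ \(G^1_{M_1}(x^k)\) is a block-1 subgradient of \(F\) evaluated at \(T\) rather than at \(x_1^k\). When \(g_1\equiv0\) and \(Q_1=\R^{n_1}\) one has \(T=x_1^k-\tfrac1{M_1}\nabla_1 f(x^k)\) and \(G^1_{M_1}(x^k)=\nabla_1 f(x^k)=\partial_1 F(x^k)\), so \(\operatorname{dist}(0,\partial_1F(x^k))^2=\|G^1_{M_1}(x^k)\|_2^2\) and the inequality is exact. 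In the genuinely composite or constrained case the quantity delivered by strong convexity is \(\tfrac1{2\mu_1}\mathcal D_1(x^k,\mu_1)\), and one only controls \(\mathcal D_1(x^k,\mu_1)\ge\|G^1_{\mu_1}(x^k)\|_2^2\) with \(\|G^1_{\mu_1}(x^k)\|\le\|G^1_{M_1}(x^k)\|\); matching these to \(\|G^1_{M_1}(x^k)\|_2^2\) needs the set-inclusion~\eqref{sets-structure} used in Lemma~\ref{lem:1}, whose absence here is exactly why only the weaker ``nearly PL'' bound is asserted. The second inequality follows by the same steps at \(x^{k+\frac12}\), where now the first block is optimal and the residual is carried by the second block.
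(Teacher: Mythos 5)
Your argument, as you yourself admit in the closing paragraph, only closes in the smooth unconstrained case $g_1\equiv 0$, $Q_1=\R^{n_1}$, and the gap is not repairable along your route. It is created by your very first reduction: passing from $F(x^{k+\frac{1}{2}})\le F(x^k)$ to the target $\mu_1\bigl(F(x^k)-F^*\bigr)\le\frac12\|G^1_{M_1}(x^k)\|_2^2$ commits you to a genuine PL-type inequality in which the gradient-mapping norm, evaluated at the \emph{same} point $x^k$, replaces $\mathcal D_1$. That intermediate statement is false for composite $F$. Take $f(x_1,x_2)=\frac{\mu}{2}x_1^2+\frac{\mu}{2}x_2^2$, $g_1(x_1)=c|x_1|$ with $c$ large, $g_2\equiv 0$, and the state $x^k=(1,0)$ (block $2$ is optimal there, so \eqref{opt-0.1} and your whole derivation apply, and $M_1=L_1=\mu$ is admissible). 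Then $F^*=0$ and $F(x^k)-F^*=\frac{\mu}{2}+c$, while $T^1_{\mu}(x^k)=\prox_{\frac{1}{\mu}g_1}\bigl(1-\tfrac{1}{\mu}\nabla_1 f(x^k)\bigr)=\prox_{\frac{1}{\mu}g_1}(0)=0$, so $G^1_{\mu}(x^k)=\mu$ and $\frac{1}{2\mu}\|G^1_{\mu}(x^k)\|_2^2=\frac{\mu}{2}$: your target fails by the arbitrarily large amount $c$. The inequalities you list at the end, $\mathcal D_1(x^k,\mu_1)\ge\|G^1_{\mu_1}(x^k)\|_2^2$ and $\|G^1_{\mu_1}(x^k)\|\le\|G^1_{M_1}(x^k)\|$, indeed all point the wrong way, and no appeal to \eqref{sets-structure} or Lemma~\ref{lem:1} can reverse them, because the statement they would have to yield is false.

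Your diagnosis is also off: the lemma needs no condition of type \eqref{sets-structure}, and ``nearly'' does not signal such a restriction. The paper proves the two displayed inequalities in full generality for arbitrary convex $g_i$, $Q_i$; ``nearly'' refers to the index shift built into the statement itself: the mapping $G^1_{M_1}$ is evaluated at $x^k$, but the suboptimality it controls is that of the \emph{next} iterate $x^{k+\frac{1}{2}}$ (and $G^2_{M_2}(x^{k+\frac{1}{2}})$ controls $F(x^{k+1})-F^*$). This shift is exactly the idea you are missing, and the counterexample shows why it matters: there $T^1_{\mu}(x^k)=0$ is optimal, so the shifted inequality holds trivially while the unshifted one fails. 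Concretely, the paper works at the prox point rather than at the current point: by the prox optimality condition, $G^2_{M_2}(x^{k+\frac{1}{2}})$ serves as a subgradient surrogate of $F$ at $T^2_{M_2}(x^{k+\frac{1}{2}})$; strong convexity of $f$ around $x^{k+\frac{1}{2}}$ together with the descent lemma (this is where $M_2\ge L_2$ enters) gives
\[
F(x^*)\ \ge\ F\bigl(x_1^{k+1},T^2_{M_2}(x^{k+\frac{1}{2}})\bigr)-\tfrac{1}{2\mu_2}\bigl\|G^2_{M_2}(x^{k+\frac{1}{2}})\bigr\|_2^2,
\]
and then exactness of the AM step, $F(x^{k+1})\le F\bigl(x_1^{k+1},T^2_{M_2}(x^{k+\frac{1}{2}})\bigr)$, finishes. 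So your monotonicity step $F(x^{k+\frac{1}{2}})\le F(x^k)$ must be replaced by ``the AM step is at least as good as the prox step,'' and your subgradient must be taken at $T^1_{M_1}(x^k)$, where $G^1_{M_1}(x^k)$ actually lives; with those two changes your strong-convexity computation does go through in the general composite, constrained case.
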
   

\begin{proof}
Consider 
\[
    T_M^2(x^{k+\frac{1}{2}}) = \argmin_{u \in Q}\left(g_2(u) + \frac{{M_2}}{2} \|u-(x^k_2 - \frac{1}{M_2}\nabla f(x^{k+\frac{1}{2}}))\|^2_2\right)
\]

Since $T_{M_2}^2(x^{k+\frac{1}{2}})$ is a minimizer, optimality condition gives for all $v \in Q$

\begin{equation*}
    \la \partial g_2(T_{M_2}^2(x^{k+\frac{1}{2}})) + \nabla_2 f(x^{k+\frac{1}{2}}) + M(T_{M_2}^2(x^{k+\frac{1}{2}}) - x^k_2), v - T_{M_2}^2(x^{k+\frac{1}{2}}) \ra \geq 0
\end{equation*}
or
\begin{multline}
    \la \nabla_2 f(x^{k+\frac{1}{2}}) , v - T_{M_2}^2(x^{k+\frac{1}{2}}) \ra
    \geq \la -\partial g_2(T_{M_2}^2(x^{k+\frac{1}{2}})), v - T_{M_2}^2(x^{k+\frac{1}{2}}) \ra
    \\
    + \la G^2_{M_2}(x^{k+\frac{1}{2}}), v - T_{M_2}^2(x^{k+\frac{1}{2}}) \ra
    \label{opt-2.2}
\end{multline}
    
Since $f$ is strongly convex
\begin{multline}
    f(u, v) \geq f(x^{k+1}_1, x^k_2) + \la \nabla_1 f(x^{k+1}_1, x^k_2), u-x^{k+1}_1 \ra + \la \nabla_2 f(x^{k+1}_1, x^k_2), v-x^k_2 \ra 
    \\
    +  \frac{\mu_1}{2} \|u-x^{k+1}_1\|_2^2 + \frac{\mu_2}{2} \|v-x^k_2\|_2^2 \geq
    \\
    \stackrel{\scriptsize{\circled{1}}}{\geq} f(x^{k+1}_1, x^k_2) - \la \partial g_1(x_1^{k+1}), u-x^{k+1}_1 \ra + \la \nabla_2 f(x^{k+1}_1, x^k_2), v-x^k_2 \ra + \frac{\mu_2}{2} \|v-x^k_2\|_2^2 
    \\
    \stackrel{\scriptsize{\circled{2}}}{\geq}
    f(x^{k+1}_1, x^k_2) + g_1(x_1^{k+1})-g_1(u) + \la \nabla_2 f(x^{k+1}_1, x^k_2), v-x^k_2 \ra + \frac{\mu_2}{2} \|v-x^k_2\|_2^2 
    \\
    \stackrel{\scriptsize{\circled{3}}}{\geq} f(x^{k+1}_1, x^k_2) + g_1(x_1^{k+1}) + \la \nabla_2 f(x^{k+1}_1, x^k_2), T_{M_2}^2(x^{k+\frac{1}{2}})-x^k_2 \ra + \frac{\mu_2}{2} \|v-x^k_2\|_2^2
    \\
    - g_1(u)  - \la \partial g_2(T_{M_2}^2(x^{k+\frac{1}{2}})), v - T_{M_2}^2(x^{k+\frac{1}{2}}) \ra + \la G^2_{M_2}(x^{k+\frac{1}{2}}), v - T_{M_2}^2(x^{k+\frac{1}{2}}) \ra 
    \\
    \stackrel{\scriptsize{\circled{4}}}{\geq} f(x^{k+1}_1, x^k_2) + g_1(x_1^{k+1})-g_1(u) + \la \nabla_2 f(x^{k+1}_1, x^k_2), T_{M_2}^2(x^{k+\frac{1}{2}})-x^k_2 \ra - g_2(v) 
    \\
    + g_2(T_{M_2}^2(x^{k+\frac{1}{2}}))  + \frac{\mu_2}{2} \|v-x^k_2\|_2^2 + \la G^2_{M_2}(x^{k+\frac{1}{2}}), v - T_{M_2}^2(x^{k+\frac{1}{2}}) \ra 
    \\
    \stackrel{\scriptsize{\circled{5}}}{\geq}f(x^{k+1}_1, T_{M_2}^2(x^{k+\frac{1}{2}})) - \frac{M_2}{2} \|\frac{-1}{M_2}G_{M_2}^2(x^{k+\frac{1}{2}})\|^2_2 + g_1(x_1^{k+1})-g_1(u) - g_2(v)
    \\
    + g_2(T_{M_2}^2(x^{k+\frac{1}{2}})) + \frac{\mu_2}{2} \|v-x^k_2\|_2^2 + \la G^2_{M_2}(x^{k+\frac{1}{2}}), v - T_{M_2}^2(x^{k+\frac{1}{2}}) \ra
    \label{chain1.2}
\end{multline}
where we used:
\begin{itemize}
    \item \circled{1} by (\ref{opt-0.1})
    \item \circled{2} by convexity $- \la \partial g_1(x_1^{k+1}), u-x^{k+1}_1 \ra\ \geq g_1(x_1^{k+1})-g_1(u)$
    \item \circled{3} by (\ref{opt-2.2})
    \item \circled{4} by convexity $-\la \partial g_2(T_{M_2}^2(x^{k+\frac{1}{2}})), v - T_{M_2}^2(x^{k+\frac{1}{2}}) \geq g_2(T_{M_2}^2(x^{k+\frac{1}{2}})) - g_2(v)$
    \item \circled{5} since $\nabla_2f$ is $L_2$-Lipschitz continuous, for $M_2 \geq L_2$ the following holds
    \begin{multline*}
        f(x^{k+\frac{1}{2}}) + \la \nabla_2 f(x^{k+\frac{1}{2}}), T_{M_2}^2(x^{k+\frac{1}{2}}) - x^k_2\ra \geq
        \\
        \geq f(x^{k+1}_1, T_{M_2}^2(x^{k+\frac{1}{2}})) - \frac{M_2}{2} \|\frac{-1}{M_2}G_{M_2}^2(x^{k+\frac{1}{2}})\|^2_2
    \end{multline*}
\end{itemize}

Above inequality gives
\begin{multline}
    F(u, v) \geq F(x^{k+1}_1, T_{M_2}^2(x^{k+\frac{1}{2}}))
    \\
    + \frac{M_2}{2} \|\frac{-1}{M_2}G_{M_2}^2(x^{k+\frac{1}{2}})\|^2_2 +  \frac{\mu_2}{2} \|v-x^k_2\|_2^2 + \la G^2_{M_2}(x^{k+\frac{1}{2}}), v - x_2^k) \ra
    \\
    \geq F(x^{k+1}_1, T_{M_2}^2(x^{k+\frac{1}{2}}))  +  \frac{\mu_2}{2} \|v-x^k_2\|_2^2 + \la G^2_{M_2}(x^{k+\frac{1}{2}}), v - x_2^k) \ra \longrightarrow \min_{v\in \R^n}
\end{multline}

Plugging in $(u, v)=x^*$ we get one of the desired inequalities:
\begin{multline}
    F(x^*) \geq F(x^{k+1}_1, T_{M_2}^2(x^{k+\frac{1}{2}}))- \frac{1}{2\mu_2} \|G_{M_2}^2(x^{k+\frac{1}{2}})\|^2_2
    \\
    \geq F(x^{k+1})- \frac{1}{2\mu_2} \|G_{M_2}^2(x^{k+\frac{1}{2}})\|^2_2
\end{multline}
\begin{equation}
    \|G_{M_1}^1(x^{k})\|^2_2  \geq 2\mu_1\left(F(x^{k+\frac{1}{2}})- F(x^*)\right)
\end{equation}

The other inequality can be obtained the same way for the point $x^k$:
\begin{equation}
    \|G_{M_2}^2(x^{k+\frac{1}{2}})\|^2_2  \geq 2\mu_2\left(F(x^{k+1})- F(x^*)\right)
\end{equation}
\end{proof}

Combining the result of the above lemma with \eqref{s.d.-1}, \eqref{s.d.-2}, we obtain convergence rate:
\begin{align}
    \mu_1\left(F(x^{k+\frac{1}{2}})- F(x^*)\right) &\leq L_1\left(F(x^{k})- F(x^{k+\frac{1}{2}})\right)
    \\
    \mu_2\left(F(x^{k+1})- F(x^*)\right) &\leq L_2\left(F(x^{k+\frac{1}{2}})- F(x^{k+1})\right)
\end{align}

\begin{multline*}
    \left(F(x^{k+1})- F(x^*)\right) \leq (1 - \frac{\mu_2}{L_2+\mu_2}) \left(F(x^{k+\frac{1}{2}})- F(x^{*})\right)
    \\
    \left(F(x^{k+\frac{1}{2}})- F(x^*)\right) \leq (1 - \frac{\mu_1}{L_1+\mu_1})\left(F(x^{k})- F(x^{*})\right)
\end{multline*}

\begin{multline}
    \left(F(x^{k+1})- F(x^*)\right) \leq 
    \\
    \leq (1 - \frac{\mu_2}{L_2+\mu_2})(1 - \frac{\mu_1}{L_1+\mu_1}) \left(F(x^{k})- F(x^{*})\right)
\end{multline}

\bibliography{references}
\end{document}